\documentclass{amsart}

\usepackage{amsfonts,amsthm,amsmath,amssymb,
dsfont}
\usepackage{enumerate}
\usepackage{graphicx}
\usepackage{color}
\usepackage[all]{xy}
\usepackage{subfigure}
\usepackage{tikz}
\usetikzlibrary[arrows,shapes,graphs,decorations.pathmorphing,backgrounds,positioning,fit,petri]
\usepackage{ amscd,   eufrak}
 \usepackage{mathrsfs}
 \usepackage{verbatim}
\usepackage{orcidlink}

\newtheorem{theorem}{Theorem}[section]
\newtheorem{lemma}[theorem]{Lemma}
\newtheorem{proposition}[theorem]{Proposition}

\newtheorem{definition}[theorem]{Definition}
\newtheorem{example}[theorem]{Example}
\newtheorem{remark}[theorem]{Remark}

 \newcommand{\R}{{\mathbb{R}}}
 \newcommand{\N}{{\mathbb{N}}}
 \newcommand{\fn}{\mathbb{E}\sp{1}}

\begin{document}


\title[level continuous fuzzy-valued functions]{A general framework for level continuous fuzzy-valued functions}

\author{J. J. Font, S. Macario, M. Sanchis }

\address{Institut Universitari
de Matem\`{a}tiques i Aplicacions de Castell\'{o} (IMAC),
Universitat Jaume I, Avda. Sos Baynat s/n, 12071, Castell\'{o}
(Spain).}

\email{font@uji.es,  macario@uji.es, sanchis@uji.es}



\begin{abstract}
In this paper we provide a general setting to deal with level continuous fuzzy-valued functions. Namely, we embed such functions into a product of spaces of real-valued functions of two variables satisfying certain types of left-continuity, right-continuity and monotonicity.
\end{abstract}





\maketitle

\section{Introduction}

Fuzzy analysis, which is based on the
notion of fuzzy number and fuzzy-valued function just as much as classical analysis is based on the concept
of real number and real-valued function, deviates  from classical analysis and needs its own specific development due to the different algebraic and structure theoretic properties of the usual real numbers and fuzzy numbers.

Fuzzy numbers are a powerful tool for modeling uncertainty and for processing vague or subjective information in mathematical models and have been
applied to many
areas, such as control theory, signal processing, pattern recognition, neural networks,
softcomputing, intelligent techniques, system theory, making decision, and so
on (see, e.g., \cite{LiYen} and \cite{Zimmermann}).

%
%
%

The set of fuzzy numbers, $\fn$, is usually endowed with several metrics or topologies most of which were introduced in the 1980s (\cite{PuriR:83}, \cite{PuriR:86}, \cite{kloeden:80}).  Perhaps the most studied being the convergence in the supremum metric $d_{\infty}$  and the other one being the level convergence ($\tau_{\ell}$) introduced by Kaleva and Seikkala (\cite{KalSeik:84}). It is well-known that the latter is weaker than the former.
As a consequence, the space of continuous fuzzy-valued functions $C(K,(\fn,\tau_{\ell}))$ contains $C(K,(\fn,d_{\infty}))$, where $K$ is a compact space.

Using  the Goetschel-Voxman's characterization of fuzzy numbers (\cite{GV}), it is possible to embed the set of fuzzy numbers $(\fn, d_{\infty})$ into a cone of a Banach space $X=\overline{C}([0,1])\times \overline{C}([0,1])$, where $\overline{C}([0,1])$ is the space of bounded left-continuous functions on $(0,1]$, right-continuous at $0$ and having right-limits on $[0,1)$ (see, e.g., \cite{WuMa:91}). This has been used for many purposes. Among them, we should mention the study of some properties of the functions in $C(K,\fn)$   as a particular case of the Banach-valued functions in $C(K,X)$ (see  \cite{WuMa:92-1}, \cite{WuMa:92}, \cite{chen},\cite{WuGong:00}, \cite{CHRL:16}, \cite{JardonSan:18}, \cite{FSS:2018}).

Another approach, when dealing with fuzzy-number-valued continuous functions, could be to study their properties by means of  real-valued functions of two variables. Each fuzzy number $u\in \fn$ provides two real-valued functions, $u^+$ and $u^-$, which stem from the Goetschel-Voxman's parametrization of fuzzy numbers (\cite{GV}).
As a consequence each
$f\in C(K,\fn)$ yields two functions
$f_i:[0,1]\times K\rightarrow \R$ ($i=1,2$), as follows
$$f_1(\lambda,t)=f(t)^-(\lambda) \quad\text{and}\quad f_2(\lambda,t)=f(t)^+(\lambda).$$

If we consider the cross-section functions in a separate way, both are located in well-studied function spaces. Indeed, for a fixed $t\in K$, the cross-section functions $f_i(\cdot,t):[0,1]\rightarrow \R$  are real-valued functions hosted in the Banach space  $\overline{C}([0,1])$ (see references in the previous paragraph). For a fixed $\lambda\in [0,1]$, the other cross-section functions $f_i(\lambda,\cdot):K\rightarrow\R$ lie in $C(K)$. But $f_1$ and $f_2$, as functions of two variables, will exhibit some particular features that are worthwhile looking into.


%

Similar representations appear in the literature (\cite{RL:18}, \cite{ACA:13}, \cite{chen}) but in the context of continuous fuzzy numbers, where both functions, $f_1$ and $f_2$, turn out to be jointly continuous on $[0,1]\times K$.  As we will show in this manuscript, this is not the case if we deal with (not necessarily continuous) fuzzy numbers. Indeed, in such more general context, the functions $f_1$ and $f_2$ need not be continuous in the first coordinate, even if we consider a function $f\in C(K,(\fn,d_{\infty}))$. Section 3 is devoted to constructing this representation and to listing the most important features of such functions, $f_1$ and $f_2$.

In Section 4, in order to handle with this general case, we introduce a new space of functions, $\overline{C}([0,1]\times K)$,
 which is neither a vector space nor even a cone since it is not closed for the sum.
We will prove that $C(K,(\fn,\tau_{\ell}))$
can be embedded isomorphically and isometrically into $\overline{C}([0,1]\times K)\times \overline{C}([0,1]\times K)$.

Finally, in Section 5, we take advantage of the above mentioned representation to provide certain differences between the spaces  $C(K,(\fn,\tau_{\ell}))$ and $C(K,(\fn,d_{\infty}))$, their behaviour with respect to the right limits being the key.


\section{Preliminaries}

Throughout the whole paper, the word space means a topological Tychonoff space. In particular,  $K$ stands for a compact (Hausdorff) space, unless otherwise stated.
As usual, $\R$ represents the real numbers and $\N$ the natural numbers. As topological spaces they are endowed with their usual topologies.
We begin with some standard definitions about fuzzy numbers to establish the notation throughout the paper.
Let $F(\R)$ denote the family of all fuzzy subset,  that is, mappings $u:\R\rightarrow [0,1]$. For $u\in F(\R)$ and $\lambda\in [0,1]$, the $\lambda$-level set
of $u$ is defined by
$$
[u]\sp{\lambda}:=\{x\in\R \ :\ u(x)\geq \lambda\}, \quad
\lambda\in ]0,1] ,\quad [u]\sp{0}:={\rm cl}\sb{\R}\{x\in\R\ :\ u(x)>0\},
$$
where ${\rm cl}\sb{\R}A$ stands for the closure of $A$ in $\R$. The fuzzy number space $\fn$ is the set of elements $u$ of
$F(\R)$ satisfying the following properties:

\begin{enumerate}[(1)]
 \item $u$ is normal, i.e., its core is nonempty, which is to say that there exists an $x\sb{0}\in\R$ with $u(x\sb{0})=1$;
\item $u$ is convex, i.e., $u(\lambda x + (1-\lambda)y)\geq \min
\left\{u(x),u(y)\right\}$  for all $x,y\in \R, \lambda\in [0,1]$;
\item $u$ is upper-semicontinuous;
\item $[u]\sp{0}$, the support of $u$, is a compact set in $\R$.
\end{enumerate}

Notice that if $u\in\fn$, then the $\lambda$-level set
$[u]\sp{\lambda}$ of $u$ is a compact interval for each
$\lambda\in [0,1]$. We denote by $u^-(\lambda)$ and $u^+(\lambda)$ the endpoints of such interval; that is $[u]\sp{\lambda}=[u^-(\lambda),u^+(\lambda)]$.  Every real
number $r$ can be considered a fuzzy number since  $r$ can be
identified with the fuzzy number $\tilde{r}$ defined as

\[
\tilde{r}(t):=\left\{
\begin{array}{lc}
1 & \text{if } t=r,\\[1ex]
0 & \text{if } t\neq r.
\end{array}\right.
\]

We can now state the characterization of fuzzy numbers provided by Goetschel and Voxman (\cite{GV}):

\begin{theorem}\label{GW1}
Let $u\in\fn$ and $[u]\sp{\lambda}=[u^-(\lambda),u^+(\lambda)]$, $\lambda\in
 [0,1]$.
Then the pair of functions $u\sp{-}(\lambda)$ and
$u\sp{+}(\lambda)$ has the following properties:

\begin{enumerate}[(i)]
 \item $u\sp{-}(\lambda)$ is a bounded left continuous nondecreasing function
on $\mathopen(0,1\mathclose]$;

\item  $u\sp{+}(\lambda)$ is a bounded left continuous nonincreasing function
on $\mathopen (0,1\mathclose]$;

\item $u\sp{-}(\lambda)$ and $u\sp{+}(\lambda)$ are right continuous at
$\lambda=0$;

\item $u\sp{-}(1)\leq u\sp{+}(1)$.
\end{enumerate}
Conversely, if a pair of functions $\alpha (\lambda)$ and
$\beta(\lambda)$ satisfy the above conditions (i)-(iv), then there
exists a unique $u\in\fn$ such that
$[u]\sp{\lambda}=\mathopen[\alpha(\lambda),\beta(\lambda)\mathclose]$
for each $\lambda\in [0,1]$.
\end{theorem}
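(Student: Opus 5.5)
The plan is to prove the two directions separately, working directly from the definitions of the level sets and axioms (1)--(4).

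\emph{Direct part.} First I check that each $[u]^\lambda$ is a nonempty compact interval. For $\lambda\in(0,1]$, the set $[u]^\lambda$ is closed by upper semicontinuity and convex by convexity of $u$. It is contained in $[u]^0$, hence bounded, and it contains the point $x_0$ with $u(x_0)=1$, hence is nonempty. The set $[u]^0$ is a compact interval by (4), being the closure of a convex set. So $u^\pm$ are well defined and bounded by the endpoints of $[u]^0$.

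Monotonicity (i)--(ii) follows from the nesting $[u]^\mu\subseteq[u]^\lambda$ for $\lambda\le\mu$. Condition (iv) is immediate.

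For left continuity at $\lambda\in(0,1]$, I use
\[
[u]^\lambda=\bigcap_{\mu<\lambda}[u]^\mu ,
\]
which holds because $u(x)\ge\mu$ for all $\mu<\lambda$ forces $u(x)\ge\lambda$. A decreasing family of nested compact intervals has endpoints converging to the endpoints of the intersection, so $u^-(\mu)\uparrow u^-(\lambda)$ and $u^+(\mu)\downarrow u^+(\lambda)$.

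For right continuity at $0$, I show that
\[
\operatorname{cl}\Bigl(\bigcup_{\mu>0}[u]^\mu\Bigr)=[u]^0 ,
\]
since $\{u>0\}=\bigcup_{\mu>0}[u]^\mu$. The union of an increasing family of intervals is an interval whose closure has endpoints $\lim_{\mu\to0^+}u^\mp(\mu)$. This gives (iii).

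\emph{Converse.} Given $\alpha,\beta$ satisfying (i)--(iv), I first note that $\alpha(\lambda)\le\alpha(1)\le\beta(1)\le\beta(\lambda)$, so the intervals $I_\lambda=[\alpha(\lambda),\beta(\lambda)]$ are nonempty and decreasing in $\lambda$. I then define
\[
u(x)=\sup\{\lambda\in[0,1]: x\in I_\lambda\},
\]
with $u(x)=0$ if the set is empty. The key claim is that $[u]^\lambda=I_\lambda$ for $\lambda\in(0,1]$.

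The inclusion $I_\lambda\subseteq[u]^\lambda$ is trivial. For the reverse, suppose $u(x)\ge\lambda$. Then $x\in I_\mu$ for $\mu$ arbitrarily close to $\lambda$ from below, or for $\mu=\lambda$ itself, using nestedness. Left continuity then gives $x\in\bigcap_{\mu<\lambda}I_\mu=I_\lambda$. This is the main point where left continuity is used, and I expect it to be the delicate step: one must handle the case where the supremum is attained versus approached.

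From this claim I derive the axioms. Normality holds since $I_1\neq\emptyset$. Convexity holds because all strict level sets are intervals. Upper semicontinuity holds because each $[u]^\lambda$ is closed. Compactness of the support and the identity $[u]^0=I_0$ follow from
\[
\{u>0\}=\bigcup_{\lambda>0}I_\lambda ,
\]
together with right continuity at $0$, by the same closure argument as in the direct part.

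\emph{Uniqueness.} A fuzzy number is determined by its level sets via $u(x)=\sup\{\lambda: x\in[u]^\lambda\}$, so any two fuzzy numbers with the same level sets coincide.
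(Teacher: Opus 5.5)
Your proof is correct. The paper gives no proof of its own here; it quotes the result from Goetschel and Voxman. What you wrote is the standard argument:
\begin{itemize}
\item the level sets are nested compact intervals;
\item left continuity comes from $[u]^\lambda=\bigcap_{0<\mu<\lambda}[u]^\mu$;
\item right continuity at $0$ comes from $\{u>0\}=\bigcup_{\mu>0}[u]^\mu$;
\item the converse rebuilds $u$ through $u(x)=\sup\{\lambda : x\in I_\lambda\}$.
\end{itemize}

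Two small points are worth tightening, though neither is a gap.

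First, hypotheses (i)--(ii) give monotonicity only on $(0,1]$. You use $\alpha(0)\le\alpha(\lambda)$ in the chain $\alpha(\lambda)\le\alpha(1)\le\beta(1)\le\beta(\lambda)$ at $\lambda=0$ and in $I_\lambda\subseteq I_0$. Derive it from (iii) via $\alpha(0)=\lim_{\mu\to0^+}\alpha(\mu)=\inf_{\mu>0}\alpha(\mu)$, and do the same for $\beta$.

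Second, the step you flag as delicate is routine. Nestedness makes $\{\mu\in(0,1] : x\in I_\mu\}$ an initial segment containing $(0,u(x))$. So $u(x)\ge\lambda$ gives $x\in\bigcap_{0<\mu<\lambda}I_\mu=I_\lambda$ directly, whether or not the supremum is attained.
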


Fuzzy numbers, $u$, whose corresponding functions $u^-, u^+$ are continuous will be called continuous fuzzy numbers and we shall denote them as
$$\fn_{c}:=\{ u\in\fn\ :\ u^-,\; u^+ \ \text{are continuous}\}.$$

Given $u,v\in \fn$ and $k\in \mathbb{R}$, we can define
$u+v$ and $ku$ using interval arithmetic (see, e.g., \cite{DK:1994}), and
it is well-known that $\fn$ endowed with these two natural operations
is not a vector space but a cone.

We can endow $\fn$ with the supremum metric:

\begin{definition}{\rm \cite{GV}}\label{GW2}
 For $u, v\in\fn$, we can define
$$
d\sb{\infty}(u,v):=\sup\sb{\lambda\in [0,1]}\max\left\{|u\sp{-}(\lambda)-v\sp{
- } (\lambda)|, |u\sp{+}(\lambda)-v\sp{+} (\lambda)|\right \}.
$$

\end{definition}

It is worth mentioning that
$$
d\sb{\infty}(u,v)=\sup\sb{\lambda\in [0,1]}\left\{d_H([u]^{\lambda},[v]^{\lambda}) \right\},
$$
where $d_H$ stands for the Hausdorff metric. Furthermore,
$(\fn , d\sb{\infty})$ is known to be a complete metric
space (\cite{DK:1994}).



Although other metrics and topologies can be considered, we will focus on the level-convergence topology.

\begin{definition}
We say that a net $\{u_k\}_{k\in D} \subset \fn$ levelly converges to $u\in\fn$ if $\lim_k d_H([u_k]^{\lambda},[u]^{\lambda})=0$ for any $\lambda\in[0,1]$.
\end{definition}

It is easy to prove that the net $\{u_k\}_{k\in D} \subset \fn$ levelly converges to $u\in\fn$, if and only if, $\lim_k u_k^-(\lambda)=u^-(\lambda)$ and
 $\lim_k u_k^+(\lambda)=u^+(\lambda)$, for each $\lambda\in [0,1]$.

Fang and Huang described in \cite{FH:03} (see also \cite{FoMiSan}) the topology for this convergence  where the basic open sets can be written as
$$\text{\small $V(u,\{\lambda_1,\ldots,\lambda_n\},\epsilon):=
\{v\in\fn\; :\; \max_{1\leq j\leq n} \{|v^-(\lambda_j)-u^-(\lambda_j)|,
|v^+(\lambda_j)-u^+(\lambda_j)|\}<\epsilon\}$}$$
for $u\in\fn$, $\{\lambda_1,\ldots,\lambda_n\}\subset[0,1]$ and $\epsilon>0$.

\medskip
 We shall denote by $\fn_{\ell}$ the set of fuzzy numbers endowed with the topology of level convergence and by  $\fn_{\infty}$ when it carries the topology induced by the metric $d_{\infty}$.

\section{A representation for fuzzy-valued continuous functions}

As mentioned in the Introduction, it is clear that, on $\fn$,  the topology of level-convergence $\tau_{\ell}$ is coarser than the topology induced by the metric $d_{\infty}$, thus resulting in $C(K,\fn_{\infty})\subset C(K,\fn_{\ell})$. This inclusion is proper as the following example, borrowed from \cite{FH:04}, shows.

\begin{example}[Example 5.1 in \cite{FH:04}]\label{ex:levelcont1}
{\em Let $f:[0,1]\rightarrow \fn$ be a fuzzy-valued function defined as
\begin{align*}
f(t)(x)&=\begin{cases}
\frac{1}{2}+x^{1/t} & \text {if $0\leq x \leq \left(\frac{1}{2}\right)^t$}\\[2ex]
1  & \text {if $\left(\frac{1}{2}\right)^t\ <  x \leq 1$}\\[2ex]
0  & \text {otherwise,}
\end{cases} \quad t\in(0,1]\\[2ex]
f(0)(x)&=\begin{cases}
\frac{1}{2} & \text {if $0\leq x < 1$}\\[2ex]
1  & \text {if $ x=1$}\\[2ex]
0  & \text {otherwise,}
\end{cases} \\[2ex]
\end{align*}
Then, for each $\lambda\in[0,1]$, we have $f_2(\lambda,t):=f(t)^+(\lambda)=1$ for all $t\in [0,1]$ and
\begin{align*}
f_1(\lambda,t):=f(t)^-(\lambda)&=\begin{cases}
0& \text {if $0\leq \lambda \leq \frac{1}{2}$}\\[2ex]
\left(\lambda-\frac{1}{2}\right)^t  & \text {if $\frac{1}{2} < \lambda  \leq 1,$}
\\[2ex]
\end{cases} \quad t\in(0,1],
\\[2ex]
f_1(\lambda,0):=f(0)^-(\lambda)&=\begin{cases}
0 & \text {if $0\leq \lambda \leq \frac{1}{2}$}\\[2ex]
1  & \text {if $ \frac{1}{2}<\lambda\leq 1.$}\\[2ex]
\end{cases}
\end{align*}
It is clear that $f(t)$ is level-continuous on $[0,1]$ but it is not $d_{\infty}-continuous$ at $t=0$.}
\end{example}

%
%

In this section we provide a representation of the functions in $C(K,\fn_{\ell})$ and $C(K,\fn_{\infty})$ which seems to be useful to study their properties. Such representation stems from the Goetschel-Voxman's characterization of fuzzy numbers (see Theorem~\ref{GW1}).


\begin{definition}\label{f1f2}
Let $K$ be a space. Let $f:K\rightarrow \fn$ be a fuzzy-valued function. We define the two following functions
$f_1,f_2:[0,1]\times K\rightarrow\R$ as
\[
\begin{aligned}
f_1(\lambda,t)&:=[f(t)^-](\lambda),\\
f_2(\lambda,t)&:=[f(t)^+](\lambda).
\end{aligned}
\]
\end{definition}

\bigskip
From the Goetschel-Voxman's characterization, we know that, for each $t\in K$, the cross-section functions $f_i(\cdot,t):[0,1]\rightarrow\R$, $i=1,2$, satisfy
properties $(i)-(iv)$ in Theorem \ref{GW1}. We are also interested in what happens to the other cross-section functions, that is, $f_i(\lambda,\cdot):K\rightarrow\R$, $i=1,2$, for a fixed $\lambda \in [0,1]$. We will deal with two cases, namely, when $f_i$, $i=1,2$, come from a function $f\in C(K,\fn_{\ell})$  or when they do from  $f\in C(K,\fn_{\infty})$. It is apparent that, in both cases, we obtain continuous functions $f_i:K\rightarrow \R$, $i=1,2$, but we need to take a closer look at which type of continuity we get.

Thus, if $f:K\rightarrow \fn_{\infty}$ is a continuous function, we have that, given $t_0\in K$ and $\epsilon>0$, there exists an open neighbourhood $ V_{t_0}$ of $t_0\in K$ such that, for every $t\in V_{t_0}$, we have
$d_{\infty}(f(t),f(t_0))<\epsilon$,
which means that
\[
\begin{aligned}
|f_1(\lambda,t)-f_1(\lambda,t_0)|&<\epsilon \\
|f_2(\lambda,t)-f_2(\lambda,t_0)|&<\epsilon \\
\end{aligned}\quad \text {for all $\lambda\in[0,1]$,\ $t\in V_{t_0}$.}
\]
It is worth noting that $V_{t_0}$ depends only on $t_0$ and $\epsilon$ but not on $\lambda$.
Then, we will say that
\begin{equation}\label{eq:uniformLambda}
\lim_{t\to t_0} f_i(\lambda,t)=f_i(\lambda,t_0)\quad\text{uniformly on $\lambda$.}
\end{equation}

On the other hand, if $f:K\rightarrow \fn_{\ell}$ is a continuous function, we will say that $f$ is level-continuous on $K$ and we will have that, given $t_0\in K$,
$\lambda_0\in[0,1]$  and $\epsilon>0$, there exists an open neighbourhood $V_{(\lambda_0,t_0)}$ of $t_0$ such that, for every $t\in V_{(\lambda_0,t_0)}$, we can write
$f(t)\in V(f(t_0),\lambda_0,\epsilon)$,
which means that, for such $\lambda_0\in[0,1]$ and $\epsilon>0$, there exists $V_{(\lambda_0,t_0)}$ with
\[
\begin{aligned}
|f_1(\lambda,t)-f_1(\lambda,t_0)|&<\epsilon, \\
|f_2(\lambda,t)-f_2(\lambda,t_0)|&<\epsilon, \\
\end{aligned}
\]
for all $t\in V_{(\lambda_0,t_0)}$.
We remark here that $V_{(\lambda_0,t_0)}$ depends  on $\lambda_0$, $t_0$ and $\epsilon$.
Then, we will say that
\begin{equation}\label{eq:pointconverg}
\lim_{t\to t_0} f_i(\lambda,t)=f_i(\lambda,t_0)\quad\text{pointwise on $\lambda$}.
\end{equation}

Hence, for both $\fn_{\infty}$ and $\fn_{\ell}$, we get, given a fixed $\lambda\in[0,1]$, two continuous functions in the second coordinate, $f_1(\lambda,\cdot),f_2(\lambda,\cdot):K\rightarrow \R$ which are also bounded since $K$ is  compact. Indeed,   we shall also obtain, in Section~\ref{sec:LCC},  that  $f_1$ and $f_2$ are  bounded as functions on $[0,1]\times K$.

 Summarizing all the above,  we can state that  every $f\in C(K, \fn_{\ell})$
  provides two real-valued functions $f_i(\lambda,t)$, $i=1,2$, defined on $[0,1]\times K$ with the following properties:
 for every $t\in K$,

\begin{enumerate}[(i)]
 \item $f_1(\cdot,t)$ is a bounded left continuous nondecreasing function
on $\mathopen (0,1\mathclose]$;

\item  $f_2(\cdot,t)$ is a bounded left continuous nonincreasing function
on $\mathopen (0,1\mathclose]$;

\item $f_1(\cdot,t)$ and $f_2(\cdot,t)$ are right continuous at
$\lambda=0$;

\item $f_1(1,t)\leq f_2(1,t)$;
\end{enumerate}
and, for every $\lambda\in[0,1]$ and $i=1,2$,
\begin{enumerate}
\item[(v)] $f_i(\lambda,\cdot)$ is a continuous function on $K$.

\end{enumerate}

Then, we have a representation for fuzzy-valued continuous functions, $f=(f_1,f_2)$, which share the properties (i)-(v) for both
$\fn_{\infty}$-valued continuous functions and for $\fn_{\ell}$-valued continuous functions.
We will see in Section~\ref{sec:diff} that this representation will allow us to show some differences between the spaces $C(K, \fn_{\ell})$ and $ C(K,\fn_{\infty})$ as well.

The parametric representation given by Goetschel-Voxman of the fuzzy numbers has been used for long to embed the fuzzy number space $(\fn,d_{\infty})$ into a Banach space, namely $X=\overline{C}([0,1])\times \overline{C}([0,1])$, where $\overline{C}([0,1])$ denotes the Banach space of bounded left-continuous functions on $(0,1]$, right continuous at $0$ and having right-limits on $[0,1)$, endowed with the supremum norm (see \cite{WuMa:91}). This embedding enables to represent the functions $ f\in C(K, \fn_{c})$ by functions $f\in C(K,X)$ and it has been used to obtain, among others, some results about integrability or measurability of such functions
(see, e.g., \cite{WuMa:92-1}, \cite{WuMa:92}), fuzzy optimization (\cite{Wu:04}), etc.

In \cite{ACA:13} it was defined an embedding of $C(K,\fn_{c})$ into $C([0,1]\times K)\times C([0,1]\times K)$, using  this representation, but it is worth mentioning that  the topology  inherited by $d_{\infty}$ and  the one from the level-convergence $\tau_{\ell}$ coincide on the set of continuous fuzzy numbers $\fn_{c}$ . We intend to provide a similar embedding but in the more general case when the fuzzy numbers are not continuous and $\fn$ carries the level-convergence topology. Firstly, we will need to study the behaviour of such  representation $(f_1,f_2)$ and the space where both functions $f_1$ and $f_2$ are naturally placed.

\section{The space of left continuous - continuous functions}
\label{sec:LCC}

Recall that $\fn_{c}$ stands for the set of continuous fuzzy numbers. Note that, if we consider continuous $\fn_{c}$-valued functions, the corresponding real-valued functions introduced in Definition \ref{f1f2} are continuous (see \cite{chen}) over the compact space $M=[0,1]\times K$ and a large number of classical results can be applied.
However, the situation is not so friendly when we consider  $\fn$-valued continuous functions. Indeed we need to introduce some new concepts in order to deal with this setback.

\begin{definition}
Let $K$ be a space. A function $f:[0,1]\times K\rightarrow \R$ is said to be
\begin{itemize}
\item
 {\em separately left continuous - continuous} at $(\lambda_0,t_0)\in(0,1]\times K$ when  $f(\cdot,t_0):[0,1]\rightarrow\R$ is left-continuous at each $\lambda\in(0,1]$, and
$f(\lambda_0,\cdot):K\rightarrow \R$ is continuous at each $t\in K$.

\medskip
\item {\em (jointly) left continuous - continuous} at $(\lambda_0,t_0)\in(0,1]\times K$ when,
for $\epsilon>0$ , there exist $\delta>0$ and an open neigbourhood  of $t_0\in K$, $V_{(\lambda_0,t_0)}$, such that
$$|f(\alpha,s)-f(\lambda_0,t_0)|<\epsilon, \quad\text{for all } (\alpha,s) \in (\lambda_0-\delta,\lambda_0)\times V_{(\lambda_0,t_0)}.$$
\end{itemize}
In a similar way, we can define {\em separately} (resp. {\em jointly  right continuous - continuous}) at $(\lambda_0,t_0)\in[0,1)\times K$.
\end{definition}

\bigskip
Let us study in more detail these new types of continuity.
The monotonicity in the first coordinate allows us to obtain a jointly left (resp. right) continuity - continuity result following the ideas applied to separately continuous functions that we can trace back to
Young \cite{Yo1910} (see also Kruse and Deely \cite{KD1969}). We say that a function $f:[0,1]\times K\rightarrow \R$ is monotone in the first coordinate if, fixed $t\in K$, $f(\cdot, t)$ is monotone. Such monotonicity could be different for distinct $t\in K$.

\begin{theorem}\label{th:jointlyLC}
Let $K$ be a space and let $f:[0,1]\times K\rightarrow \R$ be a separately left continuous - continuous at $(\lambda_0,t_0)\in(0,1]\times K$. If $f$ is monotone in the first coordinate, then $f$ is jointly left continuous - continuous at $(\lambda_0,t_0)$.
The same is true for separately right continuous - continuous at $(\lambda_0,t_0)\in[0,1)\times K$.
\end{theorem}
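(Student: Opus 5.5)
The plan is to follow Young's classical argument for separately continuous monotone functions. Monotonicity in the first coordinate lets us control $f(\alpha,s)$ for all $\alpha$ in a left interval by its values at just two points, the endpoints of that interval. At those two fixed levels we can use continuity in the second coordinate.

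Fix $\epsilon>0$. First I use the left continuity of $f(\cdot,t_0)$ at $\lambda_0$ to choose $\delta>0$ with $\delta\le\lambda_0$ such that
$$|f(\alpha,t_0)-f(\lambda_0,t_0)|<\epsilon/3 \quad\text{for all } \alpha\in[\lambda_0-\delta,\lambda_0].$$
Set $\alpha_0:=\lambda_0-\delta\in[0,\lambda_0)$. Next I use continuity in the second variable at the two levels $\alpha_0$ and $\lambda_0$. This gives an open neighbourhood $V_{(\lambda_0,t_0)}$ of $t_0$ such that, for all $s\in V_{(\lambda_0,t_0)}$,
$$|f(\alpha_0,s)-f(\alpha_0,t_0)|<\epsilon/3 \quad\text{and}\quad |f(\lambda_0,s)-f(\lambda_0,t_0)|<\epsilon/3 .$$
Combining the two estimates, both $f(\alpha_0,s)$ and $f(\lambda_0,s)$ lie within $2\epsilon/3$ of $f(\lambda_0,t_0)$ for every $s\in V_{(\lambda_0,t_0)}$.

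Now take $(\alpha,s)\in(\lambda_0-\delta,\lambda_0)\times V_{(\lambda_0,t_0)}$. Since $f(\cdot,s)$ is monotone, whether nondecreasing or nonincreasing, $f(\alpha,s)$ lies between $f(\alpha_0,s)$ and $f(\lambda_0,s)$. It is irrelevant that the direction of monotonicity may change with $s$; only the betweenness is used. An interval whose endpoints are both within $2\epsilon/3$ of $f(\lambda_0,t_0)$ lies entirely within $2\epsilon/3$ of it. Hence $|f(\alpha,s)-f(\lambda_0,t_0)|<2\epsilon/3<\epsilon$, which is joint left continuity - continuity at $(\lambda_0,t_0)$. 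The right-hand case at $\lambda_0\in[0,1)$ is identical: take $\alpha_0=\lambda_0+\delta\le 1$, use right continuity of $f(\cdot,t_0)$ on $[\lambda_0,\lambda_0+\delta]$, and apply the same betweenness argument on $(\lambda_0,\lambda_0+\delta)$.

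The one delicate point is the second step. It needs continuity of $f(\alpha_0,\cdot)$ at $t_0$ at the auxiliary level $\alpha_0\neq\lambda_0$. The literal wording of the definition of separate left continuity - continuity at $(\lambda_0,t_0)$ only mentions the level $\lambda_0$. I would read the hypothesis as in the setting of this paper, where property (v) gives continuity of $f(\lambda,\cdot)$ for every $\lambda$, and state this explicitly. If only a single level were available, I would try to choose $\alpha_0$ among the levels where continuity at $t_0$ holds, so it suffices that such levels accumulate at $\lambda_0$ from the left. Apart from this point, the argument is a routine three-epsilon estimate.
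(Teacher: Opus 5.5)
Your proposal is correct and follows the paper's proof essentially step for step. You fix $\delta$ from the left continuity of $f(\cdot,t_0)$, use continuity in $t$ at the two levels $\lambda_0-\delta$ and $\lambda_0$, and then trap $f(\alpha,s)$ between those two values by monotonicity; the paper treats the nondecreasing and nonincreasing cases separately, whereas your betweenness formulation handles both at once. Your choice of the closed interval $[\lambda_0-\delta,\lambda_0]$ is apt, as is your remark that continuity of $f(\lambda_0-\delta,\cdot)$ at $t_0$ goes beyond the literal definition: the paper's proof uses the left-continuity estimate at the endpoint $\lambda_0-\delta$, and continuity at that auxiliary level, without comment.
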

\begin{proof}
Let $(\lambda_0,t_0)\in(0,1]\times K$ and take $\epsilon>0$. Then, by the left-continuity of $f(\cdot,t_0)$ at $\lambda_0$, we have that
there exists $\delta>0$ such that
\begin{equation}\label{eq:leftC1}
|f(\lambda,t_0)-f(\lambda_0,t_0)|<\frac{\epsilon}{2},  \quad \text{ for } \lambda_0-\delta<\lambda<\lambda_0.
\end{equation}
Now, by the continuity of $f(\lambda_0-\delta,\cdot)$ and $f(\lambda_0,\cdot)$ at $t_0\in K$, we can find open neighbourhoods of $t_0\in K$,  $V_{(\lambda_0-\delta, t_0)}$ and $V_{(\lambda_0, t_0)}$, such that, for every $t\in W_{t_0}:=V_{(\lambda_0-\delta, t_0)}\cap V_{(\lambda_0, t_0)}$ we have
\begin{equation}\label{eq:Cont2}
|f(\lambda_0-\delta,t)-f(\lambda_0-\delta,t_0)|<\frac{\epsilon}{2},  \end{equation}
\begin{equation}\label{eq:Cont3}
|f(\lambda_0,t)-f(\lambda_0,t_0)|<\frac{\epsilon}{2}.
\end{equation}
Then, fix
$(\lambda,t)\in(\lambda_0-\delta,\lambda_0)\times W_{t_0}$ and assume that $f(\cdot,t)$ is nondecreasing.  We have, using \eqref{eq:Cont2} and \eqref{eq:leftC1},
\begin{align*}
f(\lambda,t)-f(\lambda_0,t_0)&\geq f(\lambda_0-\delta,t)-f(\lambda_0,t_0)\\
&=\Big(f(\lambda_0-\delta,t)-f(\lambda_0-\delta,t_0)\Big)+\Big(f(\lambda_0-\delta,t_0)-f(\lambda_0,t_0)\Big)\\
&>-\frac{\epsilon}{2} -\frac{\epsilon}{2}=-\epsilon
\end{align*}
On the other hand, using \eqref{eq:Cont3}
\begin{align*}
f(\lambda,t)-f(\lambda_0,t_0)&\leq f(\lambda_0,t)-f(\lambda_0,t_0)<\epsilon
\end{align*}

So, we have just proved that
$$-\epsilon <f(\lambda,t)-f(\lambda_0,t_0)<\epsilon,$$
which gives
$$|f(\lambda,t)-f(\lambda_0,t_0)|<\epsilon.$$

The case when $f(\cdot,t)$ is nonincreasing is similar, just reversing the inequalities above. This completes the proof for jointly left continuity - continuity.

The proof runs in a similar way for the jointly right continuity - continuity on $[0,1)\times K$.
\end{proof}

\medskip
Gathering all the information from above, we know that every $f\in C(K, \fn_{\ell})$ yields (see Definition \ref{f1f2}) two representation functions
$f_i(\lambda,t)$, $i=1,2$, defined on $[0,1]\times K$ with the following properties:
\begin{enumerate}[(i)]
\item $f_i(\lambda,t)$ are (jointly) left continuous - continuous at each $(\lambda,t)\in(0,1]\times K$.
\item $f_i(\lambda,t)$ are (jointly) right continuous - continuous at $(0,t)\in \{0\}\times K$.
\item $f_1(\cdot,t)$ is monotone nondecreasing  and $f_2(\cdot,t)$ is monotone nonincreasing.
\end{enumerate}

This motivates the following definition.

\begin{definition} \label{mainDefinition}
Let $K$ be a space.
Denote by $\overline{C}([0,1]\times K)$ the set of real-valued functions $f$ satisfying the following properties:
\begin{enumerate}[(i)]
\item $f$ is jointly left continuous - continuous at each $(\lambda,t)\in(0,1]\times K$;
\item $f$ is jointly right continuous - continuous at each $(0,t)\in\{0\}\times K$.
\item For every $t\in K$, $f(\cdot,t)$ is monotone.
\end{enumerate}
\end{definition}

Let us see that, when $K$ is a compact space,  $\overline{C}([0,1]\times K)$ is well located as a subset of the space of real-valued bounded functions.
Firstly, we notice that if $f \in \overline{C}([0,1]\times K)$ then,
for each $t\in K$, the cross-section function $f(\cdot,t):[0,1]\rightarrow \R$
is monotone and so has right-sided limit at each $\lambda_0\in [0,1)$.
We denote by $f(\lambda_0+,t)$ such right limit.

The real-valued functions defined on an interval $[a,b]$ which have left-sided limits at each $\lambda\in(a,b]$ and right-sided limits at each $\lambda\in[a,b)$ are called {\em regulated functions} (\cite{frankova:2019}). Indeed, when they are left-continuous and have right limits they are called \textit{c\`agl\`ad} functions (French: \textit{continue \`a gauche, limite \`a droite}).
It is well-known that if a real-valued function defined on a compact interval has one-sided limits, then it is bounded.
Adding a second coordinate we need to do an extra work and the monotonicity in the first coordinate helps to get the boundedness.

\begin{theorem}\label{th:bounded1}
Let $K$ be a compact space and
let $f\in \overline{C}([0,1]\times K)$.
Then $f$ is a bounded function.
\end{theorem}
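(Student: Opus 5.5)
The plan is to exploit monotonicity in the first coordinate: for each fixed $t$, the values $f(\lambda,t)$ lie between the values at the two ends of the interval. So it suffices to bound $f$ uniformly in $t$ on two thin strips, one near $\lambda=0$ and one near $\lambda=1$, and then interpolate by monotonicity. The uniformity in $t$ will come from compactness of $K$, applied to the joint one-sided continuity conditions (i) and (ii) of Definition~\ref{mainDefinition}.

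\emph{Step 1 (strip near $0$).} For each $t\in K$, condition (ii) with $\epsilon=1$ gives $\delta_t>0$ and an open neighbourhood $V_t$ of $t$ such that $|f(\alpha,s)-f(0,t)|<1$ for all $(\alpha,s)\in(0,\delta_t)\times V_t$. By compactness, finitely many $V_{t_1},\dots,V_{t_n}$ cover $K$. Put $\delta=\min_j\delta_{t_j}$ and $M_0=\max_j|f(0,t_j)|+1$. Then $|f(\alpha,s)|<M_0$ for all $\alpha\in(0,\delta)$ and all $s\in K$.

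\emph{Step 2 (strip near $1$).} In the same way, condition (i) at the points $(1,t)$ together with a finite subcover yields $\delta'>0$ and $M_1$ such that $|f(\alpha,s)|<M_1$ for all $\alpha\in(1-\delta',1)$ and all $s\in K$.

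\emph{Step 3 (endpoints and interpolation).} Fix $s\in K$. Taking the neighbourhood point equal to $s$ itself in the joint conditions shows that $f(\cdot,s)$ is right continuous at $0$ and left continuous at $1$. Hence $f(0,s)$ is a limit of values bounded by $M_0$, and $f(1,s)$ is a limit of values bounded by $M_1$, so $|f(0,s)|\le M_0$ and $|f(1,s)|\le M_1$. Since $f(\cdot,s)$ is monotone (in either direction), for every $\lambda\in[0,1]$ the value $f(\lambda,s)$ lies between $f(0,s)$ and $f(1,s)$. Therefore $|f(\lambda,s)|\le\max\{M_0,M_1\}$ on all of $[0,1]\times K$.

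\emph{Expected obstacle.} The only delicate point is that the joint one-sided conditions use open intervals $(0,\delta)$ and $(1-\delta',1)$, which exclude the endpoints. As a result, $f(0,\cdot)$ and $f(1,\cdot)$ are not directly controlled, and their continuity on $K$ is not part of the definition. Step 3 handles this by recovering the endpoint values as one-sided limits for each fixed $s$. Note also that the direction of monotonicity may vary with $s$; this is harmless, because only the betweenness property of monotone functions is used.
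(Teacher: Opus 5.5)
Your proof is correct, and it takes a different and more economical route than the paper's.

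\textbf{The paper's route.} The paper argues locally at every point $(\lambda_0,t_0)\in[0,1]\times K$. It bounds $f$ on a box $U_{(\lambda_0,t_0)}$ using four ingredients: the joint left condition at $(\lambda_0,t_0)$, the right limit $f(\lambda_0+,t_0)$ (which exists by monotonicity), continuity of the cross-sections $f(\lambda_0\pm\delta,\cdot)$ at $t_0$, and a case split on the direction of monotonicity of $f(\cdot,s)$. It then extracts a finite subcover of the product $[0,1]\times K$. This is the familiar template for proving that regulated functions on a compact interval are bounded.

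\textbf{Your route.} You use monotonicity once and globally, to reduce everything to the two slices $\{0\}\times K$ and $\{1\}\times K$. So you need only condition (ii), condition (i) at $\lambda=1$, and compactness of $K$ itself; right limits and interior cross-sections never enter. The local scheme gains nothing over this. Even without compactness, your argument bounds $f$ on entire strips $[0,1]\times W$ around each $t_0$, which is stronger than the paper's local boxes.

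\textbf{A further simplification.} Your Step 3 limit passage proves more than your closing remark suggests. Letting $\alpha\to0^+$ in $|f(\alpha,s)-f(0,t)|<\epsilon$ for $s\in V_t$ gives $|f(0,s)-f(0,t)|\le\epsilon$. Hence $f(0,\cdot)$, and likewise $f(1,\cdot)$ and indeed every $f(\lambda,\cdot)$, is continuous on $K$. The paper uses this fact without comment when it allows $\beta=\lambda_0$ in its estimate and when it invokes continuity of $f(\lambda_0\pm\delta,\cdot)$. With it, your proof shrinks to one line: $f(0,\cdot)$ and $f(1,\cdot)$ are continuous on the compact space $K$, hence bounded, and monotonicity gives $|f(\lambda,s)|\le\max\{|f(0,s)|,|f(1,s)|\}$.
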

\begin{proof}
Fix $(\lambda_0,t_0)\in [0,1]\times K$. If $\lambda_0\neq 1$ then, by the existence of right limit, we have that there exists
$\delta_{(\lambda_0,t_0)}^+ >0$ with
\begin{align}\label{eqRL1}
|f(\beta,t_0)-f(\lambda_0+,t_0)|<1 \quad \text{for $\lambda_0<\beta<\lambda_0 +\delta_{(\lambda_0,t_0)}^+$}.
\end{align}
If $\lambda_0\neq 0$, then, by the jointly left continuity - continuity at $(\lambda_0, t_0)$, we get $\delta_{(\lambda_0,t_0)}^->0$ and an open neighbourhood $V_{(\lambda_0,t_0)}$ of $t_0$
with
\begin{align}\label{eqRL2}
|f(\beta,s)-f(\lambda_0,t_0)|<1 \quad \text{for $\lambda_0-\delta_{(\lambda_0,t_0)}^-<\beta\leq \lambda_0$ and $s\in V_{(\lambda_0,t_0)}$.}
\end{align}

Also we can assume, by the continuity of the cross-sections $f(\lambda_0 +\delta_{(\lambda_0,t_0)}^+,\cdot)$ ($\lambda_0\neq 1$) and
$f(\lambda_0 -\delta_{(\lambda_0,t_0)}^-,\cdot)$ ($\lambda_0\neq  0$) at $t_0$, that there exists $V'_{(\lambda_0,t_0)}$ with
\begin{align}\label{eqRL3}
|f(\lambda_0 +\delta_{(\lambda_0,t_0)}^+,s)-f(\lambda_0 +\delta_{(\lambda_0,t_0)}^+,t_0)|&<1 \quad \text{for $s\in V'_{(\lambda_0,t_0)}$},\\
\label{eqRL4}
|f(\lambda_0 -\delta_{(\lambda_0,t_0)}^-,s)-f(\lambda_0 -\delta_{(\lambda_0,t_0)}^-,t_0)|&<1 \quad \text{for $s\in V'_{(\lambda_0,t_0)}$}.
\end{align}
Take $W_{(\lambda_0,t_0)}=V_{(\lambda_0,t_0)}\cap V'_{(\lambda_0,t_0)}$ and  define the following open sets in $[0,1]\times K$:
\begin{align*}
U_{(\lambda_0,t_0)}&:=(\lambda_0 -\delta_{(\lambda_0,t_0)}^-,\lambda_0 +\delta_{(\lambda_0,t_0)}^+)\times W_{(\lambda_0,t_0)}, \lambda_0\neq 0,1;\\
U_{(0,t_0)}&:=[0,\lambda_0 +\delta_{(0,t_0)}^+)\times W_{(0,t_0)};\\
U_{(1,t_0)}&:=(1 -\delta_{(\lambda_0,t_0)}^-,1]\times W_{(1,t_0)}.
\end{align*}

 Fix $(\beta,s)\in U_{(\lambda_0,t_0)}$. We have two possibilities
depending on which side of $\lambda_0$ is $\beta$.

\medskip
If $\lambda_0-\delta_{(\lambda_0,t_0)}^-<\beta\leq \lambda_0$ and $s\in W_{(\lambda_0,t_0)}$, by \eqref{eqRL2}, we have
\begin{align*}
|f(\beta,s)| &\leq |f(\beta,s)-f(\lambda_0,t_0)|+|f(\lambda,t_0)|<1
+|f(\lambda_0,t_0)|=\rho_{(\lambda_0,t_0)}^-.
\end{align*}

If  $\lambda_0<\beta<\lambda_0 +\delta_{(\lambda_0,t_0)}^+$ and $s\in W_{(\lambda_0,t_0)}$,  we  need the monotonicity for controlling the behaviour of $f(\beta,s)$.

Firstly,  we assume that $f(\cdot,s)$ is nondecreasing.
By \eqref{eqRL3} and \eqref{eqRL1}, we have
\begin{align*}
f(\beta,s) &\leq f(\lambda_0 +\delta_{(\lambda_0,t_0)}^+,s)=f(\lambda_0 +\delta_{(\lambda_0,t_0)}^+,s)-f(\lambda_0 +\delta_{(\lambda_0,t_0)}^+,t_0)\\
&+ f(\lambda_0 +\delta_{(\lambda_0,t_0)}^+,t_0 )-f(\lambda_0+,t_0)+f(\lambda_0+,t_0)<1+1+f(\lambda_0+,t_0)=q_{(\lambda_0,t_0)}^+.
\end{align*}
and, by \eqref{eqRL2},
\begin{align*}
f(\beta,s) &\geq f(\lambda_0,s)=f(\lambda_0,s)-f(\lambda_0,t_0)+ f(\lambda_0,t_0 )>-1+f(\lambda_0,t_0)=q_{(\lambda_0,t_0)}^-.
\end{align*}

Then,
$$q_{(\lambda_0,t_0)}^-\leq f(\beta,s)\leq q_{(\lambda_0,t_0)}^+$$

If $f(\cdot,s)$ is nonincreasing, then by reversing the inequalities, we have
\begin{align*}
f(\beta,s) &\geq -1-1+f(\lambda_0+,t_0)=p_{(\lambda_0,t_0)}^-,\\
f(\beta,s) &\leq 1+f(\lambda_0,t_0)=p_{(\lambda_0,t_0)}^+.
\end{align*}
and we obtain the bounds
$$p_{(\lambda_0,t_0)}^-\leq f(\beta,s)\leq p_{(\lambda_0,t_0)}^+.$$

Taking $\rho_{(\lambda_0,t_0)}^+=\max\{|q_{(\lambda_0,t_0)}^-|, |q_{(\lambda_0,t_0)}^+|,|p_{(\lambda_0,t_0)}^-|,|p_{(\lambda_0,t_0)}^+|\}$
we obtain, for every $(\beta,s) \in U_{(\lambda_0,t_0)}$,
$$|f(\beta,s)|\leq\rho_{(\lambda_0,t_0)}:=\max\{\rho_{(\lambda_0,t_0)}^-,\rho_{(\lambda_0,t_0)}^+\}.$$

Now,
 $\{  U_{(\lambda_0,t_0)}\}_{(\lambda_0,t_0)\in[0,1]\times K}$ is an open cover of the compact space $[0,1]\times K$. Then, we can find a finite subcover, say
 $\{  U_{(\lambda_k,t_k)}\}_{k=1}^{N}$.
 Let $\rho=\max\{ \rho_{(\lambda_k,t_k)}\ : 1\leq k\leq N\}$. Hence,
 for every $(\beta,s)\in [0,1]\times K$, we can find $1\leq k\leq N$ such that
 $(\beta,s)\in U_{(\lambda_k,t_k)}$, which means
 $$|f(\beta,s)|\leq\rho_{(\lambda_k,t_k)}\leq \rho$$
 and we are done.
\end{proof}

The following example shows that, in the above theorem, we cannot omit condition (iii) in Definition \ref{mainDefinition}:

\begin{example} {\rm
For each $n\geq 1$, let $f_n\colon [0,1]\to \mathbb{R}$ be the function defined as

$$
f_n(\lambda)=\left\{\begin{array}{cl}
0 & \quad \text{if} \quad \lambda \leq  \frac{1}{2},\medskip \\
-n^2\lambda+\frac{n^2+2n}{2} & \quad \text{if} \quad \frac{1}{2}< \lambda \leq \frac{1}{2} + \frac{1}{n}, \medskip \\
0 & \quad \text{if} \quad \frac{1}{2} + \frac{1}{n} < \lambda.
 \end{array}\right.
$$

Let $K=\left\{p\right\}\cup X$ be the Alexandroff one-point compactification of a countable (denumerably infinite) discrete space $X=\left\{a_n \, \mid \, n\geq 1\right\}$. Define the real-valued function $f$ on $[0,1]\times K$ by the rule: $f(\lambda,a_n)=f_n (\lambda)$ for any $(\lambda,a_n)\in [0,1]\times \left(K\setminus \{p\}\right)$ and $f(\lambda,p)=0$ for any $\lambda\in [0,1]$. By the definition of the functions $f_n$ ($n\geq 1$), $f$ is jointly right continuous-continuous at  $(0,t)\in \left\{0\right\}\times K$. Moreover, the function $f$ is unbounded: indeed, for all $n\geq 1$, the right-sided limit of $f_n$ at $\lambda=\frac{1}{2}$ is $n$.

We finish the example by showing that $f$ is jointly left continuous-continuous at each $(\lambda,t)\in (0,1]\times K$. It is apparent that we only need to consider the points $(\lambda,p)$ with $\lambda>\frac{1}{2}$. Fix $(\lambda_0,p)$ with $\lambda_0>\frac{1}{2}$. Since $\lambda_0>\frac{1}{2}$, there exists $n_0$ such that $\frac{1}{2}+\frac{1}{n_0}<\lambda_0$. Then $f_n$ vanishes on $(\frac{1}{2}+\frac{1}{n_0},\lambda_0]$ for all $n\geq n_0$.  Therefore, if $\lim_{k}\lambda_k=\lambda_0$ and $\lim_{k}a_{n_k}=p$, then   $\lim_k f(\lambda_k,a_{n_k})=0=f(\lambda_0,p)$. Thus, $f$ is jointly left continuous-continuous at $(\lambda_0,p)$.

}
\end{example}

For a space $X$, let $ B(X)$ be the family of bounded real-valued functions on $X$. It is well-known that it is a Banach space with the norm
$\|f\|_{\infty}:=\sup\{|f(x)|\ : \ x\in X\}$.

By the theorem above, we know that $\overline{C}([0,1]\times K)\subset B([0,1]\times K)$.
So we can endow this subset with the topology induced by the $\|\cdot\|_{\infty}$-norm. Moreover,
we will show that it is indeed a closed subset of $B([0,1]\times K)$.
\begin{proposition}\label{prop:unifconv1}
Let $\{f_n\}$ be a sequence in $\overline{C}([0,1]\times K)$ that $\| \cdot\|_{\infty}$-converges to a function $f\in B([0,1]\times K)$. Then
$f\in \overline{C}([0,1]\times K)$.
\end{proposition}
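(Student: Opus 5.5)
We verify the three conditions of Definition~\ref{mainDefinition} for $f$ one at a time. Conditions (i) and (ii) follow from the standard $\epsilon/3$ argument for uniform limits. Condition (iii), monotonicity in the first coordinate, is the delicate one, because the direction of monotonicity of $f_n(\cdot,t)$ may vary with $n$.

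For (i), fix $(\lambda_0,t_0)\in(0,1]\times K$ and $\epsilon>0$. Choose $n$ with $\|f_n-f\|_\infty<\epsilon/3$. Since $f_n$ is jointly left continuous - continuous at $(\lambda_0,t_0)$, there are $\delta>0$ and an open neighbourhood $V$ of $t_0$ such that $|f_n(\alpha,s)-f_n(\lambda_0,t_0)|<\epsilon/3$ for all $(\alpha,s)\in(\lambda_0-\delta,\lambda_0)\times V$. Then for such $(\alpha,s)$,
\begin{align*}
|f(\alpha,s)-f(\lambda_0,t_0)|&\leq |f(\alpha,s)-f_n(\alpha,s)|+|f_n(\alpha,s)-f_n(\lambda_0,t_0)|\\
&\quad+|f_n(\lambda_0,t_0)-f(\lambda_0,t_0)|<\epsilon .
\end{align*}
Condition (ii) at the points $(0,t)$ is proved identically, using the interval $(0,\delta)$ in place of $(\lambda_0-\delta,\lambda_0)$.

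For (iii), fix $t\in K$. Each $f_n(\cdot,t)$ is either nondecreasing or nonincreasing, so at least one of the two behaviours occurs for infinitely many $n$. Pass to the corresponding subsequence $f_{n_k}$. It still converges uniformly, hence pointwise, to $f$. Non-strict inequalities are preserved under pointwise limits: if $f_{n_k}(\alpha,t)\leq f_{n_k}(\beta,t)$ for all $\alpha\leq\beta$ and all $k$, then letting $k\to\infty$ gives $f(\alpha,t)\leq f(\beta,t)$. The nonincreasing case is the same with the inequality reversed. Hence $f(\cdot,t)$ is monotone. As in the paper's definition, the direction of monotonicity is allowed to depend on $t$, so no uniformity in $t$ is needed.

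The only genuine obstacle is the subsequence step in (iii), where the monotonicity direction of $f_n(\cdot,t)$ may change with $n$. The pigeonhole choice of subsequence handles it. Note also that Theorem~\ref{th:bounded1} is not needed here, since $f\in B([0,1]\times K)$ is assumed.
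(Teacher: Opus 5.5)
Your proof is correct and follows essentially the same route as the paper. The paper dismisses (i) and (ii) as apparent, and your $\epsilon/3$ argument is exactly what is meant there; for (iii) it splits the indices into the nondecreasing and nonincreasing classes, passes to the limit along an infinite class, and remarks that $f(\cdot,t_0)$ is constant when both classes are infinite, a case your pigeonhole choice of a single infinite class handles without separate treatment.
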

\begin{proof}
We need to show that
\begin{enumerate}[(i)]
\item $f$ is jointly left continuous - continuous at each $(\lambda,t)\in(0,1]\times K$;
\item $f$ is jointly right continuous - continuous at each  $(0,t)\in\{0\}\times K$;
\item For every $t\in K$, $f(\cdot,t)$ is monotone.
\end{enumerate}
(i) and (ii) are apparent. For (iii), fix $t_0\in K$. Each $f_n(\cdot,t_0)$ is a monotone function. We can split the set of indices into two sets:
\begin{align*}
A&:=\{n\in \N\ : f_n(\cdot,t_0) \text{ is monotone nondecreasing}\}\\
B&:=\{n\in \N\ : f_n(\cdot,t_0) \text{ is monotone nonincreasing}\}.\\
\end{align*}
If $\lambda,\beta\in [0,1]$ and $\lambda\leq \beta$, then
\begin{align*}
f_n(\lambda,t_0)&\leq f_n(\beta,t_0), \quad\text{ if $n\in A$},\\
f_n(\lambda,t_0)&\geq f_n(\beta,t_0), \quad\text{ if $n\in B$}.\\
\end{align*}
If both $A$ and $B$ are infinite, then the limit function $f(\cdot,t_0)$ is constant by taking limits in the inequalities above. If only one of them is infinite, then
$f(\cdot,t_0)$ is monotone as $f_n(\cdot,t_0)$ are.
\end{proof}

The proposition above shows that $\overline{C}([0,1]\times K)$ is a closed subset in $(B([0,1]\times K),\|\cdot\|_{\infty})$. So, it is also complete.

\medskip
It may happen that, for some $t \in K$, we have two functions of the space $\overline{C}([0,1]\times K)$ but with different type of monotonicity in the first coordinate, leading to the sum of these two functions not being a monotonic function at that point $t$. Thus, the space  $\overline{C}([0,1]\times K)$  is neither a vector space nor even a cone, since it is not closed for the sum.
  Nevertheless, it contains two special convex closed cones. One,  say
  $\overline{C}_{nd}([0,1]\times K)$, containing all the functions in $\overline{C}([0,1]\times K)$ for which $f(\cdot,t)$ is a nondecreasing function, for all $t\in K$. Another one, say $\overline{C}_{ni}([0,1]\times K)$, containing all the functions $f$ such that $f(\cdot, t)$ is a nonincreasing function, for all $t\in K$.

We know, by Definition~\ref{f1f2} and Theorem~\ref{th:bounded1}, that every function $f\in C(K,\fn_{\ell})$ provide two functions $f_1,f_2\in \overline{C}([0,1]\times K)$ and so they are bounded.
Hence we can define, for every $f,g\in C(K,\fn_{\ell})$, the metric
\[
D(f,g):=\sup_{t\in K}\sup_{\lambda\in [0,1]}\max\{|f_1(\lambda,t)-g_1(\lambda,t)|,|f_2(\lambda,t)-g_2(\lambda,t)|\}
\]
which can be rewritten as
\[
D(f,g)=\sup_{t\in K}d_{\infty}(f(t), g(t)),
\]
which is also the usual metric in $C(K,\fn_{\infty})$.

In \cite{ACA:13} the authors defined an embedding of $C(K,\fn_{c})$ into $C([0,1]\times K)\times C([0,1]\times K)$ using these functions $(f_1,f_2)$. We intend to provide a similar result but in the more general case when the fuzzy numbers are not continuous and $\fn$ carries the level-convergence topology.
Thus we can state (compare with Theorem~4.1 in \cite{ACA:13}):
\begin{theorem}\label{th:embedding}
The space $(C(K,\fn_{\ell}),D)$ can be embedded isomorphically and isometrically onto a closed convex cone of  $(\overline{C}([0,1]\times K)\times \overline{C}([0,1]\times K),D_{\infty})$, where the distance in this product space is given by
$$D_{\infty}((f,g),(h,j)):=\max\{ \|f-h\|_{\infty}, \|g-j\|_{\infty}\}.$$

\end{theorem}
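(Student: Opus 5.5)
The plan is to use the map $\Phi\colon C(K,\fn_{\ell})\to \overline{C}([0,1]\times K)\times \overline{C}([0,1]\times K)$, $\Phi(f):=(f_1,f_2)$, from Definition~\ref{f1f2}. I will check that it is well defined, is an isometry compatible with the cone operations, and that its image is a closed convex cone.

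\textbf{Well-definedness.} This is already essentially done. For $f\in C(K,\fn_{\ell})$, Theorem~\ref{GW1} gives the monotonicity and the one-sided continuity of each $f_i(\cdot,t)$. Level continuity gives continuity of each $f_i(\lambda,\cdot)$. Theorem~\ref{th:jointlyLC} then upgrades these to joint left (resp.\ right at $0$) continuity--continuity. So $f_1\in\overline{C}_{nd}([0,1]\times K)$ and $f_2\in\overline{C}_{ni}([0,1]\times K)$, and both are bounded by Theorem~\ref{th:bounded1}.

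\textbf{Isometry and algebraic compatibility.} The identity $D(f,g)=D_\infty(\Phi(f),\Phi(g))$ is immediate from the formula for $D$. In particular $\Phi$ is injective, because a fuzzy number is determined by its endpoint functions (uniqueness in Theorem~\ref{GW1}). For the cone operations I use interval arithmetic levelwise: $[u+v]^\lambda=[u]^\lambda+[v]^\lambda$, so $(f+g)_i=f_i+g_i$. For $k\ge0$, $(kf)_i=kf_i$. (If one wants $k<0$, then $(kf)_1=kf_2$ and $(kf)_2=kf_1$, so $\Phi$ is at least additive and positively homogeneous.) Hence $\Phi$ is an isomorphism of cones onto its image.

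\textbf{Identifying the image.} I claim that $\Phi(C(K,\fn_{\ell}))$ equals
\[
\mathcal{C}:=\{(g_1,g_2)\in \overline{C}_{nd}([0,1]\times K)\times \overline{C}_{ni}([0,1]\times K)\ :\ g_1(1,t)\le g_2(1,t)\ \forall t\in K\}.
\]
The inclusion $\subseteq$ was shown above. For $\supseteq$, fix $(g_1,g_2)\in\mathcal{C}$ and $t\in K$. Then $g_1(\cdot,t)$ and $g_2(\cdot,t)$ satisfy (i)--(iv) of Theorem~\ref{GW1}: monotonicity comes from the cones, left continuity on $(0,1]$ and right continuity at $0$ come from joint continuity restricted to the slice $s=t$, boundedness comes from Theorem~\ref{th:bounded1}, and (iv) is the defining inequality of $\mathcal{C}$. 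So there is a unique $f(t)\in\fn$ with $f(t)^-=g_1(\cdot,t)$ and $f(t)^+=g_2(\cdot,t)$. Level continuity of $f$ reduces to continuity of $g_i(\lambda,\cdot)$ for each fixed $\lambda$, which follows from Definition~\ref{mainDefinition}. For $\lambda>0$, joint left continuity--continuity at $(\lambda,t_0)$ yields, in particular, continuity in $s$ at the point $\alpha=\lambda$, provided the definition is read with $\alpha\in(\lambda-\delta,\lambda]$ as it is used in the proof of Theorem~\ref{th:bounded1}. For $\lambda=0$ the same argument uses the right version.

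\textbf{Convexity and closedness.} $\mathcal{C}$ is a convex cone because each factor cone is convex, and the pointwise inequality $g_1(1,t)\le g_2(1,t)$ is preserved by nonnegative linear combinations. For closedness, take $(g_1^n,g_2^n)\to(g_1,g_2)$ in $D_\infty$. Proposition~\ref{prop:unifconv1} puts the limits in $\overline{C}([0,1]\times K)$. Uniform (indeed pointwise) limits preserve the nondecreasing or nonincreasing direction of every slice, and they preserve the inequality at $\lambda=1$. So the limit lies in $\mathcal{C}$.

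\textbf{Main obstacle.} The delicate step is the surjectivity onto $\mathcal{C}$, specifically extracting continuity of $g_i(\lambda,\cdot)$ at the point $\lambda$ itself from the joint definition, which as literally written only constrains the open interval $(\lambda-\delta,\lambda)$. If that reading is insufficient, the fallback is to include separate continuity in $t$ explicitly in the description of the image set. This costs nothing, since it is preserved under uniform limits and under nonnegative combinations.
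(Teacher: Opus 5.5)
Your argument is correct and uses the same map $\Phi(f)=(f_1,f_2)$ as the paper. The checks of injectivity, of $\Phi(\mu f+\eta g)=\mu\Phi(f)+\eta\Phi(g)$ for $\mu,\eta\ge 0$, and of $D(f,g)=D_\infty(\Phi(f),\Phi(g))$ are also the same.

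Where you go further is in identifying the image explicitly as $\mathcal{C}$. The paper's proof only verifies closure under nonnegative combinations and then simply asserts that the image is closed. Your description of $\mathcal{C}$ consists of the monotonicity direction in each factor plus $g_1(1,t)\le g_2(1,t)$, and all of these conditions are stable under uniform limits by Proposition~\ref{prop:unifconv1}. That description is what actually proves the ``closed'' part of the statement. It also gives an intrinsic characterization of exactly which pairs arise from level-continuous functions.

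The step you flag as the main obstacle is not a real one, and neither the reading with $(\lambda-\delta,\lambda]$ nor your fallback is needed. The literal definition, with the open interval $(\lambda-\delta,\lambda)$, already forces continuity of each $g(\lambda,\cdot)$, as follows.

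\begin{enumerate}
\item Given $\epsilon>0$, take $\delta$ and $V$ from joint left continuity--continuity at $(\lambda,t_0)$. Then $|g(\alpha,s)-g(\lambda,t_0)|<\epsilon$ for all $\alpha\in(\lambda-\delta,\lambda)$ and $s\in V$.
\item Fix $s\in V$. Joint left continuity--continuity at the point $(\lambda,s)$, restricted to the slice through $s$ itself, gives $g(\alpha,s)\to g(\lambda,s)$ as $\alpha\to\lambda^-$.
\item Letting $\alpha\to\lambda^-$ in the first inequality yields $|g(\lambda,s)-g(\lambda,t_0)|\le\epsilon$ for all $s\in V$.
\end{enumerate}

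The case $\lambda=0$ is identical, using joint right continuity--continuity at $(0,t_0)$ and at $(0,s)$. Hence $\mathcal{C}$, exactly as you defined it, is the image of $\Phi$.
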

\begin{proof}
We define
$$\Phi:C(K,\fn_{\ell})\longrightarrow \overline{C}([0,1]\times K)\times \overline{C}([0,1]\times K)$$
as $\Phi(f)=(f_1,f_2)$, being $(f_1,f_2)$ the representation of $f$ given in Definition~\ref{f1f2}.
Then, we have
\begin{itemize}
\item[(i)] $\Phi$ is one-to-one, since the representation $(f_1,f_2)$ of each function $f\in C(K,\fn_{\ell})$ is unique.

\item[(ii)]  $\Phi(C(K,\fn_{\ell}))$ is a closed convex cone.

Indeed, given $f,g\in C(K,\fn_{\ell})$ and $\mu,\eta\geq 0$, we have
$$\Phi(\mu f+\eta g)=\mu\Phi(f)+\eta\Phi(g)$$
since, taking $h=\mu f+\eta g$, we get

$h_1 =h(t)^-(\lambda)=\mu f(t)^-(\lambda)+\eta g(t)^-(\lambda)=
\mu f_1(\lambda,t)+\eta g_1(\lambda,t)$

and

$h_2 =h(t)^+(\lambda)=\mu f(t)^+(\lambda)+\eta g(t)^+(\lambda)=
\mu f_2(\lambda,t)+ \eta g_2(\lambda,t).$
\item[(iii)] $\Phi$ is an isometry. Indeed,
\begin{align*}
D(f,g)&=\sup_{t\in K}\sup_{\lambda\in [0,1]}\max\{|f_1(\lambda,t)-g_1(\lambda,t)|,|f_2(\lambda,t)-g_2(t,\lambda)|\}\\
&=\max\{\sup_{t\in K}\sup_{\lambda\in [0,1]}|f_1(\lambda,t)-g_1(\lambda,t)|,\sup_{t\in K}\sup_{\lambda\in [0,1]}|f_2(\lambda,t)-g_2(\lambda,t)|\}\\
&=\max\{ \| f_1-g_1\|_{\infty}, \| f_2-g_2\|_{\infty}\}
=D_{\infty}((f_1,f_2),(g_1,g_2)).
\end{align*}
%
\end{itemize}

\end{proof}

\section{Differences between $C(K, \fn_{\ell})$ and $C(K, \fn_{\infty})$}
\label{sec:diff}

In \cite{FoMiSan} it is noticed that, over the set of continuous fuzzy numbers $\fn_{c}$, both topologies, the one inherited by $d_{\infty}$ and  the one from the level-convergence $\tau_{\ell}$, coincide. So, $C(K,\fn_{c,\infty})=C(K,\fn_{c,\ell})$.
But in general, as $\fn_{\infty}\neq \fn_{\ell}$,  some differences in the behaviour of such functions are expected. How deep they are is an interesting question which we will outline here with some examples, being the key the right limit-continuous property (see \eqref{eq:RLCP} in Lemma~\ref{lem:jointRL} below).

First of all, one might think that if $f\in C(K, \fn_{\infty})$, then $f(t)$ is a continuous fuzzy number for every $t\in K$, in contrast to the level continuous case (see Example  \ref{ex:levelcont1}). However this is not the case as the following simple example shows.

\begin{example}
Let us fix a non-continuous fuzzy number, $u$.
We can now introduce the constant function $f(t)=u$ for all $t\in K$. It is apparent that $f\in C(K, \fn_{\infty})$, but $f(t)$ is not a continuous fuzzy number (for any $t\in K$).

\end{example}

In our next lemmas we focus on the behaviour of the functions in $C(K, \fn_{\infty})$ and in $C(K, \fn_{\ell})$ with respect to the right limits.

\begin{lemma}\label{lem:rightL1}
Let $f\in \overline{C}([0,1]\times K)$  and $\epsilon>0$.
For each $t_0\in K$ and every $\lambda_0\in[0,1)$, there exists
$\delta_{(\lambda_0,t_0)}>0$ such that, for all
$\lambda\in (\lambda_0,\lambda_0+\delta_{(\lambda_0,t_0)})$, we have
$$|f(\lambda+,t_0)-f(\lambda_0+,t_0)|<\epsilon$$
that is, $$\lim_{\lambda\to\lambda_0^+} f(\lambda+,t_0)=f(\lambda_0+,t_0).$$

\end{lemma}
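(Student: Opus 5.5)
This is really a one-variable statement. Fix $t_0\in K$ and put $g:=f(\cdot,t_0):[0,1]\to\R$. By condition (iii) of Definition~\ref{mainDefinition}, $g$ is monotone, so every right limit $g(\lambda+)=f(\lambda+,t_0)$ exists for $\lambda\in[0,1)$, as noted before Theorem~\ref{th:bounded1}. The claim is then the classical fact that $\lambda\mapsto g(\lambda+)$ is right continuous. Neither the joint continuity nor the variable $t$ plays any role beyond fixing $t_0$.

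The plan is as follows. Given $\epsilon>0$ and $\lambda_0\in[0,1)$, the existence of the right limit $g(\lambda_0+)$ gives $\delta=\delta_{(\lambda_0,t_0)}>0$ with $\lambda_0+\delta\le 1$ such that
$$|g(\beta)-g(\lambda_0+)|<\epsilon/2 \quad\text{for all } \beta\in(\lambda_0,\lambda_0+\delta).$$
Now fix $\lambda\in(\lambda_0,\lambda_0+\delta)$. Every $\beta\in(\lambda,\lambda_0+\delta)$ also lies in $(\lambda_0,\lambda_0+\delta)$, so the displayed inequality holds for all $\beta$ in a right neighbourhood of $\lambda$. Letting $\beta\to\lambda^+$ gives
$$|g(\lambda+)-g(\lambda_0+)|\le\epsilon/2<\epsilon,$$
which is exactly the statement.

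The only point needing care is that $g(\lambda+)$ is well defined for each such $\lambda$. This holds because $\lambda<\lambda_0+\delta\le1$, so $\lambda\in[0,1)$, and monotonicity supplies the right limit there. Once the $\delta$ is chosen, the limit passage is routine.

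I expect no real obstacle. The one subtlety is using the strict inequality with $\epsilon/2$, so that the bound survives the passage to the limit as a non-strict inequality $\le\epsilon/2$ and the strict bound $<\epsilon$ still follows.
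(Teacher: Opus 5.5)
Your proof is correct, and it departs from the paper's argument in the second step. Both proofs begin the same way: choose $\delta$ from the existence of $f(\lambda_0+,t_0)$, so that $f(\beta,t_0)$ is close to $f(\lambda_0+,t_0)$ for all $\beta\in(\lambda_0,\lambda_0+\delta)$.

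The paper then splits into the nondecreasing and nonincreasing cases. It uses monotonicity as an order comparison, sandwiching $f(\lambda+,t_0)-f(\lambda_0+,t_0)$ between $0$ and $f(\lambda,t_0)-f(\lambda_0+,t_0)$. You instead let $\beta\to\lambda^+$ inside the window and use only that non-strict inequalities pass to the limit.

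Your route uses monotonicity solely to guarantee that the right limits exist. It therefore needs no case distinction, and it works verbatim for any function on $[0,1]$ with right limits at every point of $[0,1)$, for instance any regulated function.

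It is also the more robust of the two. In the nondecreasing case the paper's comparison amounts to $f(\lambda+,t_0)\le f(\lambda,t_0)$, which points the wrong way: a nondecreasing function satisfies $f(\lambda,t_0)\le f(\lambda+,t_0)$, strictly at a jump. The nonincreasing case has the symmetric problem. The paper's step must be repaired by comparing with $f(\beta,t_0)$ for some $\beta\in(\lambda,\lambda_0+\delta)$, which is essentially your observation that the window contains a right neighbourhood of $\lambda$.

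Your choice of $\delta$ with $\lambda_0+\delta\le 1$, which ensures $f(\lambda+,t_0)$ is defined throughout the window, is a detail the paper leaves implicit.
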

\begin{proof}
Fix $t_0\in K$. We can find $\delta_{(\lambda_0,t_0)}>0$ such that,
for $\lambda\in (\lambda_0,\lambda_0+\delta_{(\lambda_0,t_0)})$,
\begin{align}
|f(\lambda,t_0)-f(\lambda_0+,t_0)|&<\epsilon.
\end{align}
Now assume that $f(\cdot,t_0)$ is nondecreasing. Then we have,
for every $\lambda\in (\lambda_0,\lambda_0+\delta_{(\lambda_0,t_0)})$,
$$0\leq f(\lambda+,t_0)-f(\lambda_0+,t_0)\leq f(\lambda,t_0)-f(\lambda_0+,t_0)<\epsilon,$$
which yields $$|f(\lambda+,t_0)-f(\lambda_0+,t_0)|<\epsilon.$$
If $f(\cdot,t_0)$ is nonincreasing then, by reversing the inequalities,
$$0\geq f(\lambda+,t_0)-f(\lambda_0+,t_0)\geq f(\lambda,t_0)-f(\lambda_0+,t_0)>-\epsilon$$
and we get the same conclusion.
\end{proof}
\begin{lemma}\label{lem:rightL2}
Let $f\in \overline{C}([0,1]\times K)$.  Assume that $f(\lambda, t)$ converges to  $f(\lambda, t_0)$ uniformly on $\lambda$ (see~\eqref{eq:uniformLambda}).
Then, for such $t_0\in K$ and for every $\epsilon>0$, there exist an open neighbourhood $V_{t_0}$ of $t_0$ such that, if $t\in V_{t_0}$, then
$$|f(\lambda_0+,t)-f(\lambda_0+,t_0)|<\epsilon, \quad\text{ for all  $\lambda_0\in[0,1)$}, $$
that is, $$\lim_{t\to t_0} f(\lambda_0+,t)=f(\lambda_0+,t_0) \quad\text{ for all  $\lambda_0\in[0,1)$}.$$
\end{lemma}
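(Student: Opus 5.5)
The idea is to pass the uniform estimate to the limit $\beta\to\lambda_0^+$, one $\lambda_0$ at a time. The neighbourhood comes from the uniform convergence hypothesis and does not depend on $\lambda_0$. Monotonicity plays no role beyond guaranteeing that the right limits $f(\lambda_0+,t)$ and $f(\lambda_0+,t_0)$ exist, which was already noted after Definition~\ref{mainDefinition}.

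Fix $\epsilon>0$. By \eqref{eq:uniformLambda}, choose an open neighbourhood $V_{t_0}$ of $t_0$ such that
$$|f(\beta,t)-f(\beta,t_0)|<\frac{\epsilon}{2}\quad\text{for all }\beta\in[0,1],\ t\in V_{t_0}.$$
Now fix $t\in V_{t_0}$ and $\lambda_0\in[0,1)$. The functions $f(\cdot,t)$ and $f(\cdot,t_0)$ are monotone, so both right limits at $\lambda_0$ exist. Letting $\beta\to\lambda_0^+$ in the displayed inequality, we get
$$|f(\lambda_0+,t)-f(\lambda_0+,t_0)|\le\frac{\epsilon}{2}<\epsilon.$$
Since $V_{t_0}$ was chosen before $\lambda_0$, the bound holds uniformly in $\lambda_0\in[0,1)$, which is exactly the claim. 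The limit statement follows immediately.

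The only delicate point is the strict inequality. Passing to the limit only preserves a non-strict bound, which is why I start from $\epsilon/2$ rather than $\epsilon$. One can also phrase this without limits: given $\lambda_0$, pick a single $\beta$ close enough to $\lambda_0$ from the right that both $|f(\beta,t)-f(\lambda_0+,t)|$ and $|f(\beta,t_0)-f(\lambda_0+,t_0)|$ are less than $\epsilon/4$. The triangle inequality then gives a bound of $\epsilon$. The choice of $\beta$ may depend on $t$, but that is harmless because $V_{t_0}$ is already fixed.
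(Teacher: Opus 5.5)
Your proof is correct and is essentially the paper's argument. The paper's proof only cites the Moore--Osgood theorem on interchanging limits, and your step of letting $\beta\to\lambda_0^+$ in the uniform estimate is exactly the relevant part of that theorem's proof, written out for this case. Doing it by hand has two small advantages. It makes explicit that $V_{t_0}$ does not depend on $\lambda_0$, which is the uniform form of the statement; Moore--Osgood as usually stated only gives the iterated-limit equality for each fixed $\lambda_0$. It also avoids the existence part of that theorem, since monotonicity already provides the right limits.
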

\begin{proof}

 Use the  Moore-Osgood theorem (see \cite[Chapter~VII, Theorem~2]{Graves:1946}) about interchanging limits.
\end{proof}

Example~\ref{ex:levelcont1} allows us to show that, in Lemma~\ref{lem:rightL2}, we cannot drop out  the uniform convergence assumption.
\begin{example}\label{ex:levelcont2}
Take $g:[0,1]\times K\rightarrow \R$ the function defined by
$$g(\lambda,t):=f(t)^-(\lambda)$$
where $f$ is the function in Example~\ref{ex:levelcont1} above. Then
for $\lambda_0=\frac{1}{2}$ and $t_0=0$  we have
\begin{align*}
\lim_{t\to t_0} g(\lambda_0+,t) &=\lim_{t\to 0} \left(\lim_{\lambda\to \frac{1}{2}^+} g(\lambda,t)\right) =
\lim_{t\to 0} \left(\lim_{\lambda\to \frac{1}{2}^+} \left(\lambda-\frac{1}{2}\right)^t \right)=0\\[2ex]
&\neq g(\lambda_0+,t_0)=\lim_{\lambda\to \frac{1}{2}^+} g(\lambda, 0)=1.
\end{align*}

 \end{example}
 \begin{remark}
It is apparent that every $f\in C(K,\fn_{\infty})$ yields two functions, $f_1(\lambda,t)$ and $f_2(\lambda,t)$, which satisfy the conditions in Lemma~\ref{lem:rightL2}, but it is not the case if we consider $f\in C(K,\fn_{\ell})$ (see Example~\ref{ex:levelcont1}).
 \end{remark}

The next lemma tells us that we can get a kind of jointly right limit - continuous property (see Equation~\eqref{eq:RLCP} below) for functions $f:[0,1]\times K \rightarrow
\R$ when we assume that $f(\cdot,t)$ is monotone for every $t\in K$ and $f(\lambda,t)$  converges uniformly on $\lambda$  to $f(\lambda ,t_0)$.
\begin{lemma}\label{lem:jointRL}
Let $f\in \overline{C}([0,1]\times K)$.
Assume that $f(\lambda, t)$ converges to  $f(\lambda, t_0)$, uniformly on $\lambda$ (see~\eqref{eq:uniformLambda}).
Given $(\lambda_0,t_0)\in[0,1)$ and  $\epsilon>0$, there exist $\delta>0$ and an open neighbourhood $V_{t_0}$ of $t_0$ such that
\begin{equation}\label{eq:RLCP}
|f(\lambda,t)-f(\lambda_0+,t_0)|<\epsilon, \quad\text{for all
$(\lambda,t)\in (\lambda_0,\lambda_0+\delta)\times V_{t_0}$.}
\end{equation}
\end{lemma}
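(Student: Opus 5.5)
We plan a three-term triangle inequality: compare $f(\lambda,t)$ with $f(\lambda,t_0)$ using the uniform convergence hypothesis \eqref{eq:uniformLambda}, and then compare $f(\lambda,t_0)$ with $f(\lambda_0+,t_0)$ using the existence of the right limit of the monotone section $f(\cdot,t_0)$. Monotonicity in the first coordinate is only needed through the existence of the right limit $f(\lambda_0+,t_0)$, which the paper already observed for every $f\in\overline{C}([0,1]\times K)$.

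First, fix $(\lambda_0,t_0)\in[0,1)\times K$ and $\epsilon>0$. Since $f(\cdot,t_0)$ is monotone, the right limit $f(\lambda_0+,t_0)$ exists. Hence there is $\delta>0$ with $\lambda_0+\delta\le 1$ such that
$$|f(\lambda,t_0)-f(\lambda_0+,t_0)|<\frac{\epsilon}{2}\quad\text{for all }\lambda\in(\lambda_0,\lambda_0+\delta).$$
Second, by the assumption that $f(\lambda,t)\to f(\lambda,t_0)$ uniformly on $\lambda$, there is an open neighbourhood $V_{t_0}$ of $t_0$, independent of $\lambda$, such that
$$|f(\lambda,t)-f(\lambda,t_0)|<\frac{\epsilon}{2}\quad\text{for all }\lambda\in[0,1],\ t\in V_{t_0}.$$
Third, for $(\lambda,t)\in(\lambda_0,\lambda_0+\delta)\times V_{t_0}$ we combine the two estimates:
$$|f(\lambda,t)-f(\lambda_0+,t_0)|\le|f(\lambda,t)-f(\lambda,t_0)|+|f(\lambda,t_0)-f(\lambda_0+,t_0)|<\epsilon,$$
which is exactly \eqref{eq:RLCP}.

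The only point needing care is that $V_{t_0}$ must not depend on $\lambda$, and this is precisely what the uniformity hypothesis provides. Without it, as Example~\ref{ex:levelcont2} shows, no single neighbourhood works for all $\lambda$ near $\lambda_0$. An alternative route would pass through Lemma~\ref{lem:rightL2}, comparing $f(\lambda,t)$ with $f(\lambda_0+,t)$ and then with $f(\lambda_0+,t_0)$. However, that route would require a $\delta$ uniform in $t$ for the right limits, which is harder to obtain, so the direct decomposition above is the one to use.
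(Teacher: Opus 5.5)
Your proof is correct, but it takes a different and shorter route than the paper's. You let the uniformity hypothesis absorb all the $t$-dependence at once: $\sup_{\lambda}|f(\lambda,t)-f(\lambda,t_0)|<\epsilon/2$ on $V_{t_0}$. After that, only the right limit of the single section $f(\cdot,t_0)$ is needed, and monotonicity enters only to guarantee that $f(\lambda_0+,t_0)$ exists.

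The paper instead fixes $t$, splits according to whether $f(\cdot,t)$ is nondecreasing or nonincreasing, and sandwiches $f(\lambda,t)$ between quantities attached to fixed levels. On one side it uses $f(\lambda_0+\delta,t)$, handled by continuity in $t$ at the level $\lambda_0+\delta$ together with the right limit at $t_0$. On the other side it uses a right limit of $f(\cdot,t)$, handled through Lemma~\ref{lem:rightL1} and Lemma~\ref{lem:rightL2}, i.e.\ the Moore--Osgood interchange of limits. This is essentially the alternative you set aside. Monotonicity is precisely what removes the need for a $\delta$ uniform in $t$: for $\lambda\in(\lambda_0,\lambda_0+\delta)$, the value $f(\lambda,t)$ lies between $f(\lambda_0+,t)$ and $f(\lambda_0+\delta,t)$ for every $t$. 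As written, the paper's lower estimate uses $f(\lambda,t)\ge f(\lambda+,t)$ for nondecreasing $f(\cdot,t)$, which goes the wrong way; the valid comparison is $f(\lambda,t)\ge f(\lambda_0+,t)$.

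Once repaired, the sandwich argument gives a sharper statement. The conclusion at $(\lambda_0,t_0)$ only needs $f(\lambda_0+,t)\to f(\lambda_0+,t_0)$ at the single level $\lambda_0$. This is strictly weaker than uniform convergence in $\lambda$: in Example~\ref{ex:levelcont1} it holds at $t_0=0$ for every $\lambda_0\neq\frac{1}{2}$, although uniformity fails there. Your argument gains brevity and needs neither the two preceding lemmas nor any case split on the monotonicity type of $f(\cdot,t)$. It also works for any function for which $f(\lambda_0+,t_0)$ exists and the uniform convergence holds.
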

\begin{proof}
Fix $(\lambda_0,t_0)\in[0,1)$. Take $\epsilon>0$. We can find $\delta_{(\lambda_0,t_0)}>0$ and $V_{t_0}$ such that
\begin{align}
|f(\lambda+,t_0)-f(\lambda_0+,t_0)|&<\frac{\epsilon}{2},\quad \text{if $\lambda\in (\lambda_0,\lambda_0+\delta_{(\lambda_0,t_0)})$},\\
|f(\lambda_0,t)-f(\lambda_0,t_0)|&<\frac{\epsilon}{2},\quad \text{if $t\in V_{t_0}$},\\
|f(\lambda_0+\delta_{(\lambda_0,t_0)}, t_0)-f(\lambda_0+,t_0)|&<\frac{\epsilon}{2},\quad \text{if $t\in V_{t_0}$.}
\end{align}
Then, take any $(\lambda,t)\in (\lambda_0,\lambda_0+\delta_{(\lambda_0,t_0)})\times V_{t_0}$ and assume that $f(\cdot,t)$ is nondecreasing. We have
\begin{align*}
f(\lambda,t)-f(\lambda_0+,t_0)&\leq f(\lambda_0+\delta_{(\lambda_0,t_0)}, t)-f(\lambda_0+,t_0)\\
&=f(\lambda_0+\delta_{(\lambda_0,t_0)}, t)-f(\lambda_0+\delta_{(\lambda_0,t_0)},t_0)\\
&+f(\lambda_0+\delta_{(\lambda_0,t_0)}, t_0)-f(\lambda_0+,t_0)<\frac{\epsilon}{2}+\frac{\epsilon}{2}=\epsilon.
\end{align*}
On the other hand,
\begin{align*}
f(\lambda,t)-f(\lambda_0+,t_0)&\geq f(\lambda+, t)-f(\lambda_0+,t_0)\\
&=f(\lambda+, t)-f(\lambda+,t_0)+f(\lambda+, t_0)-f(\lambda_0+,t_0)\\
&>-\frac{\epsilon}{2}-\frac{\epsilon}{2}=-\epsilon.
\end{align*}
So we have just shown that $|f(\lambda,t)-f(\lambda_0+,t_0)|<\epsilon$.

The proof assuming that $f(\cdot,t)$ is nonincreasing runs in a similar way, just reversing the inequalities.

\end{proof}

\begin{remark}

{\rm The properties in Lemma~\ref{lem:jointRL} are fullfilled by the corresponding functions $f_1,f_2$ associated to a function $f\in C(K,\fn_{\infty})$.
We saw in Theorem~\ref{th:jointlyLC} that the monotonicity was a sufficient condition to obtain the  jointly left continuous - continuous property, but in order to obtain the corresponding jointly  right limit - continuous property we need something else, namely, the uniform convergence. Functions $f\in C(K,\fn_{\ell})$ lack this condition and we will see in the next example that the jointly right limit - continuous property is not longer true for them.}
\end{remark}

\begin{example}
{\em Take again $g:[0,1]\times K\rightarrow \R$ defined as
$$g(\lambda,t):=f(t)^-(\lambda)$$
where $f$ is the function in Example~\ref{ex:levelcont1} above. We recall the definition of $g(\lambda,t)$:
\begin{align*}
g(\lambda,t)&:=\begin{cases}
0& \text {if $0\leq \lambda \leq \frac{1}{2}$},\\[2ex]
\left(\lambda-\frac{1}{2}\right)^t  & \text {if $\frac{1}{2} < \lambda  \leq 1,$}\\[2ex]
\end{cases}
\end{align*}
for $t\in (0,1]$ and
\begin{align*}
g(\lambda,0)&:=\begin{cases}
0 & \text {if $0\leq \lambda \leq \frac{1}{2}$,}\\[2ex]
1  & \text {if $ \frac{1}{2}<\lambda\leq 1.$}\\[2ex]
\end{cases}
\end{align*}
We are going to prove that it is not jointly right limit - continuous at $(\frac{1}{2},0)$.
Assume the contrary. Then for $\epsilon=\frac{1}{4}$ we can find
$0<\delta<\frac{1}{2}$ and $0<\eta<\frac{1}{2}$ with
$$\left|g(\lambda,t)-g\left(\frac{1}{2}+,0\right)\right|<\frac{1}{4}$$
for all $(\lambda,t)$ with $\frac{1}{2}<\lambda<\frac{1}{2}+\delta$ and $0<t<\eta$.
But, then, by definition of $g(\lambda,t)$, we have
\begin{align*}
\left|\left(\lambda-\frac{1}{2}\right)^t-1\right|&= 1-\left(\lambda-\frac{1}{2}\right)^t <\frac{1}{4},
\end{align*}
which yields
$$\left(\frac{1}{2}\right)^{\frac{1}{2}}>\delta^{\eta}>\left(\lambda-\frac{1}{2}\right)^t>\frac{3}{4},$$
a contradiction.
}
\end{example}

\section{Conclusions}

Just as continuous real-valued functions play a crucial role in classical mathematical analysis, continuous fuzzy-number-valued functions are in the central core of fuzzy mathematical analysis, particularly in several topics such as fuzzy differential equations, fuzzy optimization, fuzzy decision and so on.

It is, thus, apparent that providing a general framework to deal with  continuous fuzzy-number-valued functions is essential to help in the evolution of the above mentioned topics.

In this paper we present a new general setting to deal with level continuous fuzzy-valued functions. Namely, we embed, isometrically and isomorphically,  such functions into a product of spaces of real-valued functions of two variables satisfying certain type of left-continuity, right-continuity and monotonicity.

We consider that our paper can be helpful in classical fuzzy analysis since we provide a wide new framework
to deal with level continuous fuzzy-valued functions by introducing a space of
functions which seems not to have made up its ways in the literature of (not even
necessarily fuzzy) classical analysis yet.
Namely, the originality of our paper relies on the introduction of the so-called
space of left continuous - continuous real-valued functions $\overline{C}([0,1]\times K)$ which has
not been studied before and which is neither a vector space nor even a cone since
it is not closed for the sum.

\section{Compliance with Ethical Standards}

{\bf Author Contributions:} All authors contributed equally and significantly
in writing this paper.

\bigskip
{\bf Funding:} No funding.

\bigskip
{\bf Data Availability Statement:} No data was used for the research described in this
paper.

\bigskip
{\bf Conflicts of Interest:} The authors declare that they have no conflict of interest.

\bigskip
{\bf Ethical approval:} This article does not contain any studies with human participants or animals performed by any of the authors.



\end{document}